\newtheorem{theorem}{Theorem}[section]
\newtheorem{definition}[theorem]{Definition}
\newtheorem{example}[theorem]{Example}
\newtheorem{lemma}[theorem]{Lemma}
\newtheorem{remark}[theorem]{Remark}
\newenvironment{proof}[1][Proof]{\textbf{#1.} }{\ \rule{0.5em}{0.5em}}
\begin{document}

\title{Some results about Mittag-Leffler function's integral representations}
\author{  Yayun Wu\\
{\small  \textit{Beijing Normal University, Beijing 100875,
China} }}
\maketitle

\begin{abstract}
In many articles on the integral expressions of Mittag-Leffler functions, we have found that whether the integral expression can be used at the origin is still unresolved. In this article we give the applicable conditions and proof. And we also generalize some important conclusions about Mittag-Leffler function.
\end{abstract}
\section{Introduction}
In recent years, the Mittag-Leffler functions and Mittag-Leffler type functions have caused more and more interests among scientists, engineers and applications. This interest comes from the close connection of these functions to fractional differential equations.
The Mittag-Leffler function
\begin{equation*}
E_{\alpha}(z)=\sum_{k=0}^{\infty}\frac{z^{k}}{\Gamma(k\alpha+1)},\alpha>0,z\in\mathbb{C}.\eqno(1)
\end{equation*}
was firstly studied by Mittag-Leffler. An important generalization,
\begin{equation*}
E_{\alpha,\beta}(z)=\sum_{k=0}^{\infty}\frac{z^{k}}{\Gamma(k\alpha+\beta)},\alpha>0,\beta\in\mathbb{C},z\in\mathbb{C},\eqno(2)
\end{equation*}
was introduced in$[6,7]$. These two functions can be viewed as the result of generalization of the exponential function,
\begin{equation*}
exp(z):=\sum_{n=0}^{\infty}\frac{z^{n}}{n!}=\sum_{n=0}^{\infty}\frac{z^{n}}{\Gamma(n+1)}.\eqno(3)
\end{equation*}
More investigations of properties of the Mittag-Leffler function and its applications to fractional differential equations and related questions have been carried out by $[1-4,8,9]$.

Integral representations play a prominent role in the analysis of the Mittag-Leffler function. In $[10]$, the authors study integral represents of $E_{\alpha,\beta}(z)$ in the open right half-plane when $\alpha\in(0,2),\beta\in(0,1+\alpha)$ by using the well known $Hankel$ formula about gamma function.The authors study the distribution of zeros of $E_{\alpha,\beta}(z)$ relying on the integral represent. In $[11]$,the authors also use $Hankel's$ integral contour acquire the integral representation of $E_{\alpha,\beta}(z),\alpha>0,\beta\in\mathbb{R},z\in\mathbb{C}$. They use the results to study the algorithms for numerical evalution of the Mittag-Leffler function.

Another method to study integral represent of Mittag-Leffler function is Lapalce inversion.In $[12]$,
Gorenflo and Mainardi use the Laplace inversion integral to give the integral represent of  $E_{\alpha}(-t^{\alpha}),\alpha\in(0,3)$ and use these results to search fractional oscillations. For more results about Mittag-Leffler's integral represent and its applications,one can see $[5,13]$.By using $Hankel$ contour or Laplace inversion method, the original will be the branch point. We want to know that whether the integral represent can be applied at original.The previous articles have not explained this problem. In this paper, we will give the proof.

In this paper,we use the Laplace inversion method to acquire the integral represent of $E_{\alpha,\beta}(\lambda t^{\alpha}),\alpha\in(0,1),\beta\in(0,1+\alpha),\lambda\in\mathbb{C},t>0,$ and study its asymptotic properties. When $\beta=1$ or $\beta=\alpha$,we can obtain the integral represent of $E_{\alpha}(\lambda t^{\alpha})$ and $E_{\alpha,\alpha}(\lambda t^{\alpha})$.
In fact,$E_{\alpha}(\lambda t^{\alpha})$ and $E_{\alpha,\alpha}(\lambda t^{\alpha})$ are often used in fractional differential equations and application problems.

\section{Preliminaries}

In this section,we give some useful definitions and related results. One can see$[5,8,9,16]$ for more details.
\begin{definition}
\label{DE2.3} The $Mittag-Leffler$ function is defined by the following
formula
\begin{equation*}
E_{\alpha,\beta}(z):=\sum_{k=0}^{\infty}\frac{z^{k}}{\Gamma(k\alpha+\beta)}%
,E_{\alpha}(z):=E_{\alpha,1}(z),\eqno(4)
\end{equation*}
where $z\in \mathbb{C},\alpha>0,\beta>0$.$\Gamma(\cdot)$ is gamma function.
\end{definition}

\begin{definition}
\label{DE2.4} Let $f(t)$ be an arbitrary function defined on the interval $0< t<\infty$; then
\begin{equation*}
\mathcal{L}\{f(t)\}(s)=\int_{0}^{\infty}e^{-st}f(t)dt,\eqno(5)
\end{equation*}
is the Laplace transform, provided that the integral exists. And if $f(t)$ is of exponential order $e^{at}$, then the Laplace transform of $f(t)$ exists for all provided $\Re s>a$.
\end{definition}
\begin{example}$([17])$Let $\alpha>0,\beta>0,\Re(s)>0,|\lambda|<|s^{\alpha}|$, then we have
\begin{equation*}
\mathcal{L}\{t^{\beta-1}E_{\alpha,\beta}(\lambda t^{\alpha})\}(s)=\frac{s^{\alpha-\beta}}{z^{\alpha}-\lambda}.\eqno(6)
\end{equation*}
\end{example}
\begin{lemma}
\label{DE2.5}{\bf (Waston's Lemma)}Suppose that the $f(t)$ has the asymptotic expansion:
\begin{equation*}
f(t)\sim\sum_{v=1}^{\infty}a_{v}t^{\lambda_{v}},t\rightarrow0+,\eqno(7)
\end{equation*}

\begin{equation*}
-1<Re(\lambda_{1})<Re(\lambda_{2})<Re(\lambda_{3})<\cdots;
\end{equation*}
then $F(p)$ has the corresponding asymptotic expansion
\begin{equation*}
F(p)\sim\sum_{v=1}^{\infty}\frac{a_{v}\Gamma(\lambda_{v}+1)}{p^{\lambda_{v}+1}},\arg(p)\in(-\frac{\pi}{2},\frac{\pi}{2}),|p|\rightarrow\infty.\eqno(8)
\end{equation*}
\end{lemma}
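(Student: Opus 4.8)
The plan is to reduce everything to the single elementary transform $\mathcal{L}\{t^{\lambda}\}(p)=\Gamma(\lambda+1)\,p^{-\lambda-1}$ and then to control the error incurred by truncating the expansion $(7)$. Throughout I assume, as is implicit in the statement and consistent with Definition \ref{DE2.4}, that $f$ is locally integrable on $(0,\infty)$ and of exponential order, so that $F(p):=\mathcal{L}\{f(t)\}(p)$ is defined for $\Re p$ large; I also fix a closed subsector $|\arg p|\le \tfrac{\pi}{2}-\varepsilon$ for arbitrary $\varepsilon>0$, on which $\Re p\ge |p|\sin\varepsilon$, so that $|p|\to\infty$ forces $\Re p\to\infty$ at a comparable rate. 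This is the mechanism that turns the hypothesis $\arg(p)\in(-\tfrac{\pi}{2},\tfrac{\pi}{2})$ into something usable.

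\textbf{Step 1 (the model integral).} First I would establish $\int_0^{\infty}e^{-pt}t^{\lambda}\,dt=\Gamma(\lambda+1)\,p^{-\lambda-1}$ for $\Re\lambda>-1$ and $\Re p>0$: for $p>0$ real this is the substitution $u=pt$; for general $p$ in the right half-plane one rotates the ray of integration by $\arg p$ and invokes Cauchy's theorem, the circular arcs near $0$ and near $\infty$ contributing nothing because $\Re\lambda>-1$ and $\Re(pt)\to+\infty$ along the original ray. Applying this termwise to $(7)$ produces exactly the coefficients $a_v\Gamma(\lambda_v+1)\,p^{-\lambda_v-1}$ appearing on the right of $(8)$.

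\textbf{Step 2 (truncation and remainder).} Fix $N$. From $(7)$, using the term of index $N+1$, we get $R_N(t):=f(t)-\sum_{v=1}^{N}a_v t^{\lambda_v}=O\bigl(t^{\Re\lambda_{N+1}}\bigr)$ as $t\to0+$; choose $\delta\in(0,1)$ and a constant $C_N$ with $|R_N(t)|\le C_N t^{\Re\lambda_{N+1}}$ on $(0,\delta)$. Split $F(p)=\int_0^{\delta}+\int_{\delta}^{\infty}$. On $[\delta,\infty)$ the factor $e^{-pt}$ is dominated by $e^{-\delta\Re p}$ times an integrable function (exponential order), so $\int_{\delta}^{\infty}e^{-pt}f(t)\,dt=O(e^{-\delta\Re p})$, which is $o(|p|^{-M})$ for every $M$ and hence irrelevant to a power-type asymptotic series; similarly each $\int_0^{\delta}e^{-pt}t^{\lambda_v}\,dt$ differs from $\int_0^{\infty}e^{-pt}t^{\lambda_v}\,dt$ only by an exponentially small term. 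For the genuine remainder,
\[
\Bigl|\int_0^{\delta}e^{-pt}R_N(t)\,dt\Bigr|\le C_N\int_0^{\infty}e^{-(\Re p)t}t^{\Re\lambda_{N+1}}\,dt=\frac{C_N\,\Gamma(\Re\lambda_{N+1}+1)}{(\Re p)^{\Re\lambda_{N+1}+1}}=O\bigl(|p|^{-\Re\lambda_{N+1}-1}\bigr),
\]
where the last equality uses Step 1 with the real exponent $\Re p$ together with the subsector bound $\Re p\ge|p|\sin\varepsilon$.

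\textbf{Step 3 (assembling) and the main difficulty.} Combining Steps 1--2 gives $F(p)-\sum_{v=1}^{N}a_v\Gamma(\lambda_v+1)\,p^{-\lambda_v-1}=O\bigl(|p|^{-\Re\lambda_{N+1}-1}\bigr)+(\text{exponentially small})=o\bigl(|p|^{-\Re\lambda_{N}-1}\bigr)$, since $\Re\lambda_{N+1}>\Re\lambda_{N}$. As $N$ is arbitrary, this is precisely the statement that $F(p)$ admits the asymptotic expansion $(8)$ in the sector. I expect the only delicate point to be the interplay with the sector: one must legitimately pass from the real-variable Gamma integral to complex $p$ (the contour rotation in Step 1) and one must know that $\Re p$ grows proportionally to $|p|$, which is why a clean bound requires a closed subsector of $(-\tfrac{\pi}{2},\tfrac{\pi}{2})$ rather than the full open one; a secondary, purely bookkeeping point is making explicit the standing hypotheses (local integrability at $0$, exponential order at $\infty$) under which $F(p)$ exists at all.
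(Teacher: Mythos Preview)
The paper does not supply a proof of Watson's Lemma at all: it appears in Section~2 (Preliminaries) as a classical result quoted without argument, alongside the Bromwich inversion theorem, and is simply invoked later in Theorem~4.2. So there is no ``paper's own proof'' to compare against.

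Your argument is the standard one and is correct. The reduction to the model integral $\mathcal{L}\{t^{\lambda}\}(p)=\Gamma(\lambda+1)p^{-\lambda-1}$, the split at a fixed $\delta$, the exponential-order bound on $[\delta,\infty)$, and the remainder estimate $O(|p|^{-\Re\lambda_{N+1}-1})$ on a closed subsector are all in order; your remark that the uniformity really lives on closed subsectors $|\arg p|\le \tfrac{\pi}{2}-\varepsilon$ is the right caveat and is implicit in any careful statement of the lemma.
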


\begin{theorem}{\bf (The Bromwich Inversion Theorem)} Let $f(t)$ have a continuous derivative and let $|f(t)|<Ke^{\gamma t}$
where $K$ and $\gamma$ are positive constants.Define
\begin{equation*}
\mathcal{F}(s)=\mathcal{L}[f](s)=\int_{0}^{\infty}e^{-st}f(t)dt,\Re s>\gamma.
\end{equation*}
Then
\begin{equation*}
f(t)=\frac{1}{2\pi i}\lim_{T\rightarrow\infty}\int_{c-iT}^{c+iT}e^{st}\mathcal{F}(s)ds, c>\gamma.\eqno(9)
\end{equation*}
 \end{theorem}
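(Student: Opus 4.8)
The plan is to reduce the Bromwich integral to the classical Fourier inversion theorem. First I would fix $c>\gamma$ and parametrise the vertical line $\Re s = c$ by $s = c + i\omega$ with $\omega\in\mathbb{R}$, so that $ds = i\,d\omega$ and
\[
\frac{1}{2\pi i}\int_{c-iT}^{c+iT} e^{st}\mathcal{F}(s)\,ds = \frac{e^{ct}}{2\pi}\int_{-T}^{T} e^{i\omega t}\,\mathcal{F}(c+i\omega)\,d\omega .
\]
Next, set $g(u):=e^{-cu}f(u)$ for $u>0$ and $g(u):=0$ for $u\le 0$. The growth bound $|f(u)|<Ke^{\gamma u}$ together with $c>\gamma$ gives $\int_{\mathbb{R}}|g(u)|\,du\le K/(c-\gamma)<\infty$, so $g\in L^{1}(\mathbb{R})$, and by the very definition of the Laplace transform $\mathcal{F}(c+i\omega)=\int_{\mathbb{R}} g(u)e^{-i\omega u}\,du=\widehat{g}(\omega)$ is exactly the Fourier transform of $g$. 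Hence, after dividing by $e^{ct}$, the assertion of the theorem becomes the Fourier inversion formula $g(t)=\frac{1}{2\pi}\lim_{T\to\infty}\int_{-T}^{T}\widehat{g}(\omega)e^{i\omega t}\,d\omega$ at each fixed $t>0$.

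To prove this inversion formula I would insert the definition of $\widehat g$, apply Fubini's theorem (legitimate because the $\omega$-integral runs over the bounded interval $[-T,T]$ and $g\in L^{1}$) to interchange the two integrals, and evaluate the inner integral $\int_{-T}^{T} e^{i\omega(t-u)}\,d\omega=\dfrac{2\sin\bigl(T(t-u)\bigr)}{t-u}$, obtaining
\[
\frac{1}{2\pi}\int_{-T}^{T}\widehat g(\omega)e^{i\omega t}\,d\omega=\frac{1}{\pi}\int_{0}^{\infty} g(u)\,\frac{\sin\bigl(T(t-u)\bigr)}{t-u}\,du .
\]
Since $f$ has a continuous derivative, $g$ is continuously differentiable on $(0,\infty)$, so near the fixed point $t>0$ the difference quotient $u\mapsto\bigl(g(u)-g(t)\bigr)/(u-t)$ is bounded; splitting the integral into a small neighbourhood of $u=t$ and its complement, the Riemann--Lebesgue lemma kills the contribution away from $t$ and from the difference quotient, while $\frac{1}{\pi}\int_{-\delta}^{\delta}\frac{\sin T\xi}{\xi}\,d\xi\to 1$ supplies the limit $g(t)$. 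Multiplying back by $e^{ct}$ recovers $f(t)=e^{ct}g(t)$, which is the claim.

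The main obstacle is precisely this Dirichlet-kernel limit, which is the analytic heart of Fourier inversion: one must simultaneously control the tail via $g\in L^{1}$ and Riemann--Lebesgue, control the local behaviour at $u=t$ using the $C^{1}$ hypothesis, and establish the elementary normalisation $\lim_{T\to\infty}\frac{1}{\pi}\int_{-\delta}^{\delta}\frac{\sin T\xi}{\xi}\,d\xi=1$. A secondary subtlety is that $g$ generally has a jump of size $f(0^{+})$ at $u=0$, but since we only evaluate at $t>0$ this is harmless, and the symmetric truncation in $T$ is what makes the interchange of limits clean. I would also remark, though it is not needed for the statement as written, that the value is independent of the choice of $c>\gamma$: this follows from the analyticity of $\mathcal{F}$ on $\Re s>\gamma$ together with a contour-shift argument using the decay of $\mathcal{F}$ along horizontal segments.
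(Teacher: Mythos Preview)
Your argument is the standard and correct one: reduce the Bromwich integral to Fourier inversion for the $L^{1}$ function $g(u)=e^{-cu}f(u)$, then handle the Dirichlet kernel via the $C^{1}$ hypothesis and Riemann--Lebesgue. The reasoning is sound and the potential pitfalls you flag (the jump of $g$ at $u=0$, independence of $c$) are correctly assessed.

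However, there is nothing to compare against: the paper does not prove this theorem. It is stated in the Preliminaries section as a classical result (the Bromwich Inversion Theorem) and used as a black box in the proof of Theorem~3.1, specifically in step~(V) where the vertical-line integral is identified with $t^{\beta-1}E_{\alpha,\beta}(\lambda t^{\alpha})$. So your proposal supplies a proof where the paper simply cites the result.
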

\begin{definition}
The Mellin transform of a function $ f:\mathbb{R}_{+}\rightarrow \mathbb{C}$ is the function $f^{*}$ defined by
\begin{equation*}
f^{*}(s)=\mathcal{M}[f](s)=\int_{0}^{\infty}x^{s-1}f(x)dx,
\end{equation*}
where $a<\Re(s)<b,a,b\in\mathbb{R}.$ Here $a,b$ provided that integral exists.
\end{definition}
\begin{example}$([16,p.58])$ Let $\Re(s)>-1,|\varphi|<\frac{\pi}{2}$, then
\begin{equation*}
\mathcal{M}[e^{-x\cos(\varphi)}\sin(x\sin(\varphi))](s)=\Gamma(s)\sin(\varphi s).\eqno(10)
\end{equation*}
\end{example}
\begin{example}$([16,p.60])$
Let $\mathcal{M}[f](s)=f^{*}(s)$, $\mathcal{M}[g](s)=g^{*}(s)$,and $\Re(\lambda)>0$, $\alpha>0,$ then by definition $2.6$ and Fubini theorem, we can have
\begin{equation*}
\mathcal{M}[\int_{0}^{\infty}f(x^{\alpha}u)g(\lambda u)du](s)=\frac{\lambda^{\frac{s}{\alpha}-1}}{\alpha}g^{*}(1-\frac{s}{\alpha})f^{*}(\frac{s}{\alpha}).
\eqno(11)\end{equation*}
\end{example}

\section{Main results}

In this part,we use the Laplace inversion method to study the integral represent of $E_{\alpha,\beta}(\lambda t^{\alpha}),\alpha\in(0,1),\beta\in(0,1+\alpha)$.In $[12]$,the author discuss the integral represent of $E_{\alpha}(-t^{\alpha})$. In that paper,the author didn't give the specific calculation process.In order to explain the problem in detail, we believe it is necessary to give the specific calculation process.Our results will be more general and useful for studying fractional differential equations.
\begin{theorem}Let $t\in(0,\infty)$ $\alpha\in(0,1)$,$0<\beta<\alpha+1$, $\lambda\in\mathbb{C}.$ $\Gamma:=\overset{\frown}{AB}\cup\overline{BC}\cup\overset{\frown}{CD}\cup\overline{DE}\cup \overset{\frown}{EF}\cup\overline{FA}$ denotes the integral contour and the direction leaves the region on the left(Figure $1$).We cut along the negative axis,and the origin is the branch point. $\overline{BC}$ and $\overline{DE}$ represent the upper and low land, respectively. $\overset{\frown}{CD}$ represents the arc around the origin. Choosing $c>\max\{\Re\lambda,0\}$,and
$$\begin{cases}
\overset{\frown}{AB}:z=c-Re^{i\theta},\theta\in( -\frac{\pi}{2},0),\\
\overline{BC}:z=r,r\in(c-R,-\rho),\\
\overset{\frown}{CD}:z=\rho e^{-i\theta},\theta \in(-\pi,\pi),\\
\overline{DE}:z=-r,r\in(\rho,R-c),\\
\overset{\frown}{EF}:z=c-Re^{i\theta},\theta\in(0,\frac{\pi}{2}),\\
\overline{FA}:z=c+iu,u\in(-R,R).
\end{cases}
$$

\begin{figure}
  \centering
  \includegraphics[height=7cm,width=6cm,angle=0]{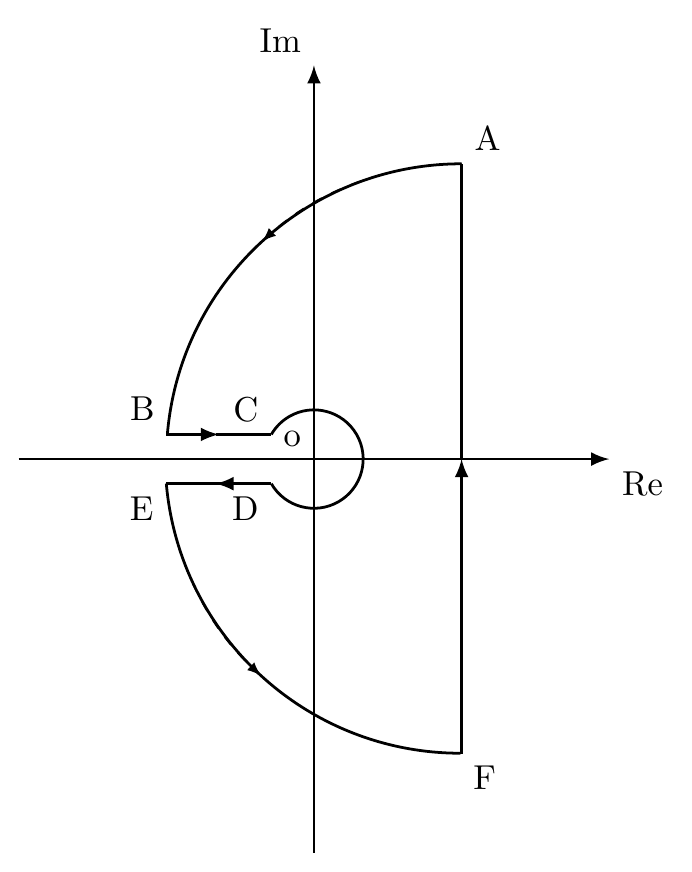}
  \caption{The modified Bromwhich contour.}\label{fig:f1}
\end{figure}

then
\begin{equation*}
\frac{1}{2\pi i}\int_{\Gamma}e^{zt}\frac{z^{\alpha-\beta}}{z^{\alpha}-\lambda}dz
=\mathop{Res}\limits_{z=\lambda^{\frac{1}{\alpha}}}\frac{e^{zt}z^{\alpha-\beta}}{z^{\alpha}-\lambda}.\eqno(12)
\end{equation*}
And let $\rho\rightarrow 0,$ $R\rightarrow \infty$,we have
\begin{equation*}
t^{\beta-1}E_{\alpha,\beta}(\lambda t^{\alpha})=\mathop{Res}\limits_{z=\lambda^{\frac{1}{\alpha}}}\frac{e^{zt}z^{\alpha-\beta}}{z^{\alpha}-\lambda}+\frac{1}{\pi}\int_{0}^{\infty}e^{-vt}v^{\alpha-\beta}\frac{v^{\alpha}\sin(\beta\pi)+\lambda\sin(\alpha-\beta)\pi}{v^{2\alpha}-2\lambda v^{\alpha}\cos(\alpha\pi)+\lambda^{2}}dv.\eqno(13)
\end{equation*}
\begin{proof}
$(1)$ By using Residue theorem,one can have
\begin{equation*}
\frac{1}{2\pi i}\int_{\Gamma}e^{zt}\frac{z^{\alpha-\beta}}{z^{\alpha}-\lambda}dz=\mathop{Res}\limits_{z=\lambda^{\frac{1}{\alpha}}}\frac{e^{zt}z^{\alpha-\beta}}{z^{\alpha}-\lambda}.
\end{equation*}
(2)We take the following steps to prove the other result.

(I): When $z\in\overset{\frown}{AB}$,then $\left|\frac{z^{\alpha-\beta}}{z^{\alpha}-\lambda}\right|\leq \frac{(2R)^{\alpha-\beta}}{(|R|-c)^{\alpha}-|\lambda|}$.And $\frac{2\theta}{\pi}<\sin\theta<\theta$,$\theta\in (0,\frac{\pi}{2})$.
\begin{equation*}
\left|\frac{1}{2\pi i}\int_{\overset{\frown}{AB}}e^{zt}\frac{z^{\alpha-\beta}}{z^{\alpha}-\lambda}dz
\right|\leq \frac{1}{2\pi}\left|\int_{-\frac{\pi}{2}}^{0}e^{(c-R\cos\theta)t}\frac{2^{\alpha-\beta}R^{\alpha-\beta+1}}{(R-c)^{\alpha}-|\lambda|}d\theta\right|
\end{equation*}

\begin{equation*}
\leq \frac{2^{\alpha-\beta}R^{\alpha-\beta+1}e^{ct}}{2\pi[(R-c)^{\alpha}-|\lambda|]}\int_{0}^{\frac{\pi}{2}}e^{-\frac{2R}{\pi}\theta}d\theta
\rightarrow 0, (R\rightarrow \infty).
\end{equation*}

$(II):$ When $z\in \overset{\frown}{EF}$,by the similar method in $(I)$,we can prove
\begin{equation*}
\frac{1}{2\pi i}\int_{\overset{\frown}{EF}}e^{zt}\frac{z^{\alpha-\beta}}{z^{\alpha}-\lambda}dz\rightarrow 0,(R\rightarrow \infty).
\end{equation*}

$(III):$ When $z\in \overset{\frown}{CD}$,we choose small $\rho$,so that $|z^{\alpha}-\lambda|>\frac{|\lambda|}{2}>0$,
\begin{equation*}
\left|\frac{1}{2\pi i}\int_{\overset{\frown}{CD}}e^{zt}\frac{z^{\alpha-\beta}}{z^{\alpha-\lambda}}dz\right|
\leq \frac{1}{2\pi}\left|\int_{-\pi}^{\pi}e^{\rho e^{i\theta}}\frac{(\rho e^{i\theta})^{\alpha-\beta}}{(\rho e^{i\theta})^{\alpha}-\lambda}\rho e^{i \theta}id\theta\right|
\end{equation*}

\begin{equation*}
\leq
\frac{\rho^{\alpha-\beta+1}}{\pi|\lambda|}\int_{-\pi}^{\pi}e^{\rho\cos\theta}d\theta
\leq \frac{2\rho^{\alpha-\beta+1 }e^{\rho}}{|\lambda|}\rightarrow 0,(\rho\rightarrow 0).
\end{equation*}

$(IV):$  When $z\in H(\rho):= \overline{BC}\cup\overset{\frown}{CD}\cup\overline{DE}$,and let $\rho$ tends to $0$, $R$ tends to $\infty$,
\begin{equation*}
\lim_{\mathop{\rho\rightarrow 0}\limits_{R\rightarrow\infty}}\frac{1}{2\pi i}\int_{H(\rho)} e^{zt}\frac{e^{(\alpha-\beta)Ln(z)}}{e^{\alpha Ln(z)}-\lambda}dz
\end{equation*}

\begin{equation*}
=\frac{1}{2\pi i}\left(\int_{-\infty}^{0}e^{zt}
\frac{e^{(\alpha-\beta)(ln|z|+i\pi)}}{e^{\alpha(ln|z|+i\pi)}-\lambda}dz
+\int_{0}^{-\infty}e^{zt}\frac{e^{(\alpha-\beta)(ln|z|-i\pi)}}{e^{\alpha(ln|z|-i\pi)}-\lambda}dz
\right)\end{equation*}

\begin{equation*}
=\frac{1}{2\pi i}\left(\int_{0}^{\infty}e^{-vt}\frac{v^{\alpha-\beta}e^{i(\alpha-\beta)\pi}}{v^{\alpha}e^{i\alpha\pi}-\lambda}dv -\int_{0}^{\infty}e^{-vt}\frac{v^{\alpha-\beta}e^{-i(\alpha-\beta)\pi}}{v^{\alpha}e^{-i\alpha\pi}-\lambda}dv\right)
\end{equation*}

\begin{equation*}
=-\frac{1}{\pi}\int_{0}^{\infty}e^{-vt}v^{\alpha-\beta}\frac{v^{\alpha}\sin(\beta\pi)+\lambda\sin(\alpha-\beta)\pi}{v^{2\alpha}-2\lambda v^{\alpha}\cos(\alpha\pi)+\lambda^{2}}dv
\end{equation*}

$(V)$: When $z\in \overline{FA}$,and let $R$ tends to $\infty$,using the Laplace inversion formula
\begin{equation*}
\lim_{R\rightarrow\infty}\frac{1}{2\pi i}\int_{c-iR}^{c+iR}e^{zt}\frac{z^{\alpha-\beta}}{z^{\alpha}-\lambda }dz=t^{\beta-1}E_{\alpha,\beta}(\lambda t^{\alpha}).
\end{equation*}
Summarizing the above results,the proof have been completed.
\end{proof}
\end{theorem}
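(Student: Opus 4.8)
My plan is to obtain (13) by inverting the known Laplace pair (6) along the deformed Bromwich contour $\Gamma$ and summing the contributions of its six pieces as $\rho\to0$, $R\to\infty$. Part (1) is immediate: fixing the branch of $z^{\alpha}=e^{\alpha\mathrm{Log}\,z}$ by the cut along $(-\infty,0]$, the function $z^{\alpha-\beta}/(z^{\alpha}-\lambda)$ is meromorphic in the bounded region enclosed by $\Gamma$, its only singularity there being the simple pole at $z=\lambda^{1/\alpha}$ (which lies inside $\Gamma$ once $c$ and $R$ are large enough), so the residue theorem with the stated orientation gives (12), the residue being $\tfrac1\alpha\lambda^{(1-\beta)/\alpha}e^{\lambda^{1/\alpha}t}$.

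For part (2) I would split $\int_{\Gamma}$ into its six arcs and segments and pass to the limit piece by piece. On the large arcs $\overset{\frown}{AB}$ and $\overset{\frown}{EF}$ one has $|z|\asymp R$, hence $|z^{\alpha-\beta}/(z^{\alpha}-\lambda)|=O(R^{-\beta})$, while $|e^{zt}|=e^{(c-R\cos\theta)t}$ with $\cos\theta>0$; using the Jordan inequality $\sin\theta>2\theta/\pi$ on $(0,\tfrac{\pi}{2})$ the integral over each arc is $O(R^{-\beta})\to0$ because $\beta>0$. On the small arc $\overset{\frown}{CD}$ one picks $\rho$ small enough that $|z^{\alpha}-\lambda|\ge|\lambda|/2$, so the integrand is $O(\rho^{\alpha-\beta})$ and, with $|dz|=\rho\,d\theta$, the whole contribution is $O(\rho^{\alpha-\beta+1})$, which tends to $0$ as $\rho\to0$ \emph{precisely} when $\alpha-\beta+1>0$, i.e.\ $\beta<\alpha+1$. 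This inequality is where the hypothesis on $\beta$ is consumed, and it is exactly the point the paper advertises: whether the contour may be collapsed onto the branch point at the origin is governed by it.

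It then remains to identify the two banks and the vertical segment. Parametrizing $\overline{BC}$ and $\overline{DE}$ by $z=-v$ on the upper side ($\mathrm{Log}\,z=\ln v+i\pi$) and lower side ($\mathrm{Log}\,z=\ln v-i\pi$) of the cut, their sum converges as $\rho\to0$, $R\to\infty$ to
\begin{equation*}
\frac{1}{2\pi i}\int_{0}^{\infty}e^{-vt}v^{\alpha-\beta}\left(\frac{e^{i(\alpha-\beta)\pi}}{v^{\alpha}e^{i\alpha\pi}-\lambda}-\frac{e^{-i(\alpha-\beta)\pi}}{v^{\alpha}e^{-i\alpha\pi}-\lambda}\right)dv;
\end{equation*}
clearing the common denominator $v^{2\alpha}-2\lambda v^{\alpha}\cos(\alpha\pi)+\lambda^{2}$ turns the parenthesis into $-2i\,[v^{\alpha}\sin(\beta\pi)+\lambda\sin((\alpha-\beta)\pi)]$ over that denominator, so the banks contribute $-\tfrac1\pi\int_0^\infty e^{-vt}v^{\alpha-\beta}\tfrac{v^{\alpha}\sin(\beta\pi)+\lambda\sin((\alpha-\beta)\pi)}{v^{2\alpha}-2\lambda v^{\alpha}\cos(\alpha\pi)+\lambda^{2}}\,dv$ (convergent at $v=0$ again because $\alpha-\beta+1>0$, at $v=\infty$ because $\beta>0$). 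On $\overline{FA}$, $z=c+iu$, the limit $R\to\infty$ is the Bromwich integral, which by the inversion theorem and (6) equals $t^{\beta-1}E_{\alpha,\beta}(\lambda t^{\alpha})$; here one checks that $f(t)=t^{\beta-1}E_{\alpha,\beta}(\lambda t^{\alpha})$ has a continuous derivative on $(0,\infty)$ and, for $\alpha\in(0,1)$, is of exponential order, so (6) holds for $\Re s>\gamma$. Since the left side of (12) is independent of $\rho,R$, equating it with the sum of the six limits — namely $t^{\beta-1}E_{\alpha,\beta}(\lambda t^{\alpha})$ minus the branch-cut integral — and rearranging yields (13).

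The only genuinely delicate point is the vanishing of the small arc $\overset{\frown}{CD}$ (equivalently the integrability at $v=0$ of the final integral): both force $\alpha-\beta+1>0$, which is precisely why the representation is valid on the stated range $0<\beta<\alpha+1$. The remaining care is routine bookkeeping of the branch of $z^{\alpha}$ across the two banks and verification of the Bromwich hypotheses (continuous derivative, exponential order, and that $\lambda^{1/\alpha}$ lies to the left of the line $\Re z=c$).
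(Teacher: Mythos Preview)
Your proposal is correct and follows essentially the same route as the paper: apply the residue theorem on the closed contour to get (12), then let $R\to\infty$ and $\rho\to0$ showing the large arcs vanish by a Jordan-type estimate, the small arc vanishes as $O(\rho^{\alpha-\beta+1})$ (this being exactly where $\beta<\alpha+1$ enters), the two banks of the cut combine to the stated real integral, and the vertical segment $\overline{FA}$ recovers $t^{\beta-1}E_{\alpha,\beta}(\lambda t^{\alpha})$ via the Bromwich inversion of (6). Your write-up is in fact slightly more explicit than the paper's about why the hypotheses on $\beta$ are needed (both for the small arc and for convergence of the branch-cut integral at $v=0$ and $v=\infty$) and about verifying the Bromwich-theorem hypotheses, but the argument is the same.
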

\begin{remark}In theorem $3.1$,we have
\begin{equation*}
\mathop{Res}\limits_{z=\lambda^{\frac{1}{\alpha}}}\frac{e^{zt}z^{\alpha-\beta}}{z^{\alpha}-\lambda}
=\begin{cases}
0,&\lambda<0,\\
\frac{1}{\alpha}e^{\lambda^{\frac{1}{\alpha}}t}\lambda^{\frac{1-\beta}{\alpha}},&\arg(\lambda^{\frac{1}{\alpha}})\in(-\pi,\pi).
\end{cases}\eqno(14)
\end{equation*}
\end{remark}
\begin{remark}
From the theorem $3.1$, when $\alpha\in(0,1),\lambda\nleq0,\arg(\lambda)\in(-\pi\alpha,\pi\alpha)$,
$t>0$,then
\begin{equation*}
E_{\alpha}(\lambda t^{\alpha})=\frac{1}{\alpha}e^{t\lambda^{\frac{1}{\alpha}%
}}-\frac{\lambda \sin \left( \pi\alpha \right) }{\pi}\int_{0}^{\infty}%
\frac{e^{-ut}u^{\alpha-1}}{u^{2\alpha}-2\lambda u^{\alpha} \cos \left(\pi\alpha
\right)+\lambda^{2}}du,\eqno(15)
\end{equation*}
\begin{equation*}
t^{\alpha-1}E_{\alpha,\alpha}(\lambda t^{\alpha})=\frac{\lambda^{\frac{1}{\alpha}-1}}{\alpha}e^{t\lambda^{\frac{1}{\alpha}%
}}+\frac{\sin(\pi\alpha) }{\pi}\int_{0}^{\infty}%
\frac{e^{-ut}u^{\alpha}}{u^{2\alpha}-2\lambda u^{\alpha} \cos \left(\pi\alpha
\right)+\lambda^{2}}du.\eqno(16)
\end{equation*}
\end{remark}

In theorem $3.1$,the original point ia a branch point.By the following theorem,we will prove that the integral represent$(14),(15)$ can also be true when $t=0$.Our conclusion complement the theorem $1'$ in $[10]$.

\begin{theorem}
 Let $\alpha\in(0,1),\beta\in(0,1+\alpha),\arg(\lambda)\in(-\pi\alpha,\pi\alpha),$ we denote \begin{equation*}f_{\alpha,\beta}(v)=\frac{1}{\pi}v^{\alpha-\beta}\frac{v^{\alpha}\sin(\beta\pi)+\lambda\sin(\alpha-\beta)\pi}{v^{2\alpha}-2\lambda v^{\alpha}\cos(\alpha\pi)+\lambda^{2}},\eqno(17)
\end{equation*}
and \begin{equation*}M_{1}=\int_{0}^{\infty}\frac{v^{2\alpha-\beta}}{v^{2\alpha}-2\lambda v^{\alpha}\cos(\alpha\pi)+\lambda^{2}}dv,\eqno(18)
\end{equation*}
\begin{equation*}
M_{2}=\int_{0}^{\infty}\frac{v^{\alpha-\beta}}{v^{2\alpha}-2\lambda v^{\alpha}\cos(\alpha\pi)+\lambda^{2}}dv,\eqno(19)
\end{equation*}
then we have
\begin{equation*}
\int_{0}^{\infty}f_{\alpha,\beta}(v)dv=\frac{\sin(\beta\pi)}{\pi}M_{1}
+\frac{\lambda\sin\pi(\alpha-\beta)}{\pi}M_{2}
\end{equation*}
\begin{equation*}
=\begin{cases}
-\frac{\lambda^{\frac{1-\beta}{\alpha}}}{\alpha},&\beta\in(1,\alpha+1),\\
1-\frac{1}{\alpha},&\beta=1,\\
nonexist,&\beta=\alpha.
\end{cases}
\end{equation*}
\end{theorem}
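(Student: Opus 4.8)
The plan is to evaluate $\int_0^\infty f_{\alpha,\beta}(v)\,dv$ by setting $t=0$ in the integral representation (13) of Theorem 3.1 and checking that the resulting limit is legitimate. First I would verify absolute convergence of the integral defining $\int_0^\infty f_{\alpha,\beta}(v)\,dv$: near $v=0$ the integrand behaves like $v^{\alpha-\beta}$ times a bounded factor (since the denominator tends to $\lambda^2\neq 0$ under the hypothesis $\arg\lambda\in(-\pi\alpha,\pi\alpha)$, which keeps $\lambda$ off the locus $v^{2\alpha}-2\lambda v^\alpha\cos\alpha\pi+\lambda^2=0$), so integrability at the origin requires $\alpha-\beta>-1$, i.e. $\beta<\alpha+1$; near $v=\infty$ the integrand decays like $v^{-\alpha-\beta}$, integrable iff $\alpha+\beta>1$. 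The case $\beta=\alpha$ gives exponent $\alpha+\beta=2\alpha$, and when $\alpha\le 1/2$ (and more relevantly the numerator $v^\alpha\sin\alpha\pi + \lambda\sin 0 = v^\alpha\sin\alpha\pi$ does not vanish) the tail integral $\int^\infty v^{-2\alpha}\,dv$ diverges — this is the "nonexist" case, and I would record it as the failure of the tail condition $\alpha+\beta>1$ precisely when $\beta=\alpha\le 1/2$; more carefully, for $\beta=\alpha$ one has $f_{\alpha,\alpha}(v) = \frac{\sin\alpha\pi}{\pi}\cdot\frac{v^\alpha}{v^{2\alpha}-2\lambda v^\alpha\cos\alpha\pi+\lambda^2}$, whose behaviour at infinity is $\sim \frac{\sin\alpha\pi}{\pi} v^{-\alpha}$, never integrable for $\alpha\in(0,1)$, hence nonexistence for all such $\alpha$.

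Next, for the cases $\beta\in(1,\alpha+1)$ and $\beta=1$, I would take $t\downarrow 0$ in (13). The left side $t^{\beta-1}E_{\alpha,\beta}(\lambda t^\alpha)$ tends to $0$ when $\beta>1$ (since $E_{\alpha,\beta}(0)=1/\Gamma(\beta)$ is finite and $t^{\beta-1}\to 0$) and tends to $E_{\alpha,1}(0)=1$ when $\beta=1$. For the residue term, Remark 3.2 gives $\operatorname*{Res}_{z=\lambda^{1/\alpha}}\frac{e^{zt}z^{\alpha-\beta}}{z^\alpha-\lambda} = \frac{1}{\alpha}e^{\lambda^{1/\alpha}t}\lambda^{(1-\beta)/\alpha}$, which tends to $\frac{1}{\alpha}\lambda^{(1-\beta)/\alpha}$ as $t\to 0$; in particular it equals $\frac1\alpha$ when $\beta=1$. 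For the integral term, I would justify passing the limit $t\to 0$ inside $\frac1\pi\int_0^\infty e^{-vt} v^{\alpha-\beta}\frac{v^\alpha\sin\beta\pi+\lambda\sin(\alpha-\beta)\pi}{v^{2\alpha}-2\lambda v^\alpha\cos\alpha\pi+\lambda^2}\,dv$ by dominated convergence, using $e^{-vt}\le 1$ and the integrability of $|f_{\alpha,\beta}|$ established in the first step. Hence $\int_0^\infty f_{\alpha,\beta}(v)\,dv = \lim_{t\to0}\big(t^{\beta-1}E_{\alpha,\beta}(\lambda t^\alpha) - \operatorname{Res}\big)$, giving $-\frac1\alpha\lambda^{(1-\beta)/\alpha}$ for $\beta\in(1,\alpha+1)$ and $1-\frac1\alpha$ for $\beta=1$, which is exactly the claimed piecewise value.

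It remains to identify $\int_0^\infty f_{\alpha,\beta}(v)\,dv$ with $\frac{\sin\beta\pi}{\pi}M_1 + \frac{\lambda\sin\pi(\alpha-\beta)}{\pi}M_2$; this is immediate by splitting the numerator $v^\alpha\sin\beta\pi + \lambda\sin(\alpha-\beta)\pi$ and using linearity of the integral, valid once we know $M_1$ and $M_2$ converge separately. I would check that $M_1$ (integrand $\sim v^{-\beta}$ at infinity, $\sim v^{2\alpha-\beta}$ at $0$) converges iff $\beta>1$ and $\beta<2\alpha+1$, and $M_2$ (integrand $\sim v^{-\alpha-\beta}$ at infinity, $\sim v^{\alpha-\beta}$ at $0$) converges iff $\alpha+\beta>1$ and $\beta<\alpha+1$; both hold on $\beta\in(1,\alpha+1)$, so the split is legitimate there. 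For $\beta=1$, $M_1$ diverges logarithmically at infinity and $M_2$ converges only if $\alpha>0$ (which it is) but $M_1$'s divergence is cancelled by the coefficient $\sin\beta\pi=\sin\pi=0$, so the combination must be interpreted as a limit $\beta\downarrow 1$ of the well-defined expression, consistent with the value $1-1/\alpha$; I would phrase the $\beta=1$ entry as this limiting value rather than a naive sum. The main obstacle is this delicate bookkeeping at the endpoints — proving the dominated-convergence step uniformly and correctly diagnosing exactly when each of $\int f_{\alpha,\beta}$, $M_1$, $M_2$ converges — since the answer's three-way split (genuine value / limiting value / nonexistence) is entirely governed by which of the exponent inequalities $\beta<\alpha+1$, $\alpha+\beta>1$, $\beta>1$ are strict, tight, or violated.
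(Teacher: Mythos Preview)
Your approach is correct and genuinely different from the paper's. The paper evaluates $M_1$ and $M_2$ \emph{separately} by direct contour integration: substituting $u=v^{\alpha}$ and then $x=\log u$ converts each $M_j$ into an integral over $\mathbb{R}$, which is closed up via the rectangle $[-R,R]\times[0,2\pi]$ and the residue theorem; a trigonometric identity then collapses the combination $\frac{\sin\beta\pi}{\pi}M_1+\frac{\lambda\sin(\alpha-\beta)\pi}{\pi}M_2$ to $-\tfrac{1}{\alpha}\lambda^{(1-\beta)/\alpha}$. You instead bypass all of this by letting $t\downarrow 0$ in the identity~(13), which is already established for $t>0$, and invoking dominated convergence. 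Your route is shorter and more conceptual, since it makes transparent \emph{why} the value must be $-\tfrac{1}{\alpha}\lambda^{(1-\beta)/\alpha}$ (it is exactly what is needed for continuity of the representation at $t=0$). The paper's route, on the other hand, yields the individual values of $M_1$ and $M_2$, which are of independent interest (cf.\ Remark~3.5) and which your limiting argument does not recover.

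One correction: your large-$v$ asymptotic for $f_{\alpha,\beta}$ is misstated. Unless $\sin\beta\pi=0$, the leading behaviour is $f_{\alpha,\beta}(v)\sim \tfrac{\sin\beta\pi}{\pi}\,v^{-\beta}$, not $v^{-\alpha-\beta}$; the exponent $-\alpha-\beta$ appears only after the cancellation $\sin\beta\pi=0$, i.e.\ at $\beta=1$. This does not damage your argument, since on the range $\beta\in(1,\alpha+1)$ one still has $\beta>1$ and hence integrability of $v^{-\beta}$, and at $\beta=1$ your stated decay $v^{-\alpha-1}$ is correct. But the sentence ``integrable iff $\alpha+\beta>1$'' should read ``integrable iff $\beta>1$ (or $\beta=1$, where the faster decay $v^{-\alpha-1}$ takes over).'' With that fix, your dominated-convergence step is fully justified and the proof goes through.
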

\begin{proof} $(I)$If $\beta\in(1,\alpha+1)$,we have
\begin{equation*}
M_{1}=\frac{1}{\alpha}\int_{0}^{\infty}\frac{v^{\alpha-\beta+1}}{(v^{\alpha}-\lambda e^{i\pi\alpha})(v^{\alpha}-\lambda e^{-i\pi\alpha})}dv^{\alpha}
\end{equation*}

\begin{equation*}
=\frac{1}{\alpha}\int_{-\infty}^{\infty}\frac{e^{(2-\frac{\beta-1}{\alpha})x}}{(e^{x}-\lambda e^{i\pi\alpha})(e^{x}-\lambda e^{-i\pi\alpha})}dx.
\end{equation*}
In order to obtain the result of the above integral,we need to calculate the following integral:
\begin{equation*}
\frac{1}{\alpha}\int_{\Gamma}\frac{e^{(2-\frac{\beta-1}{\alpha})z}}{(e^{z}-\lambda e^{i\pi\alpha})(e^{z}-\lambda e^{-i\pi\alpha})}dz
\end{equation*}
in which $\Gamma$ is the integral contour,$\Gamma=\overline{AB}+\overline{BC}+\overline{CD}+\overline{DA}( Figure 2)$,
$$\begin{cases}
\overline{AB}:z=x,x\in(-R,R)\\
\overline{BC}:z=R+iy,y\in(0,2\pi)\\
\overline{CD}:z=-x+i2\pi,x\in(-R,R)\\
\overline{DA}:z=-R-iy,y\in(-2\pi,0)
\end{cases}$$

\begin{figure}
  \centering
  \includegraphics[height=7cm,width=9cm,angle=0]{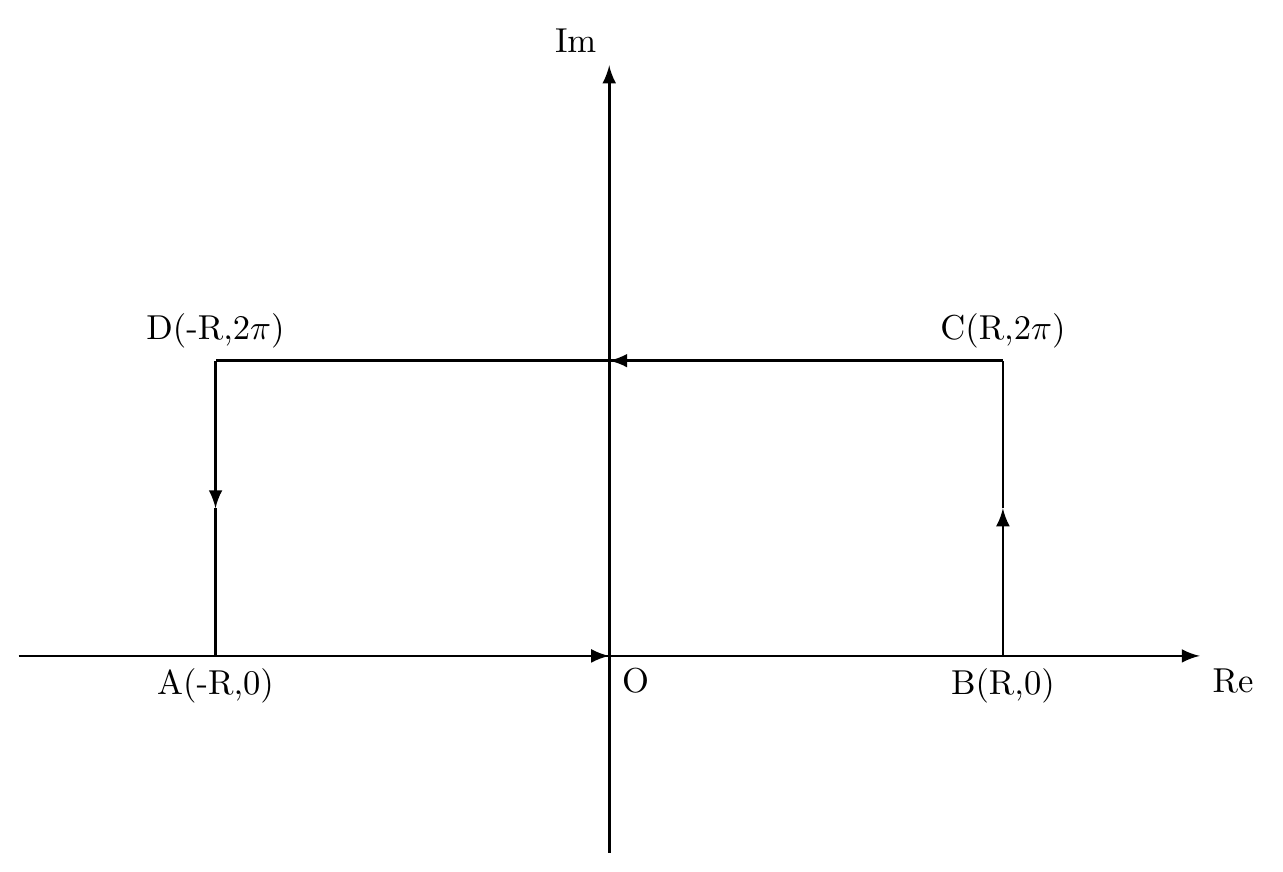}
  \caption{Integral contour.}\label{fig:f1}
\end{figure}

\begin{equation*}
\int_{\Gamma}=\int_{\overline{AB}}+\int_{\overline{BC}}+\int_{\overline{CD}}+\int_{\overline{DA}}.
\end{equation*}
By using residue theorem and let $R\rightarrow \infty$,then we have
\begin{equation*}
M_{1}=\frac{2\pi i}{\alpha[1-e^{\frac{(1-\beta)2\pi i}{\alpha}}]}\left(\frac{e^{(1+\frac{1-\beta}{\alpha})z_{1}}}{e^{z_{1}}-\lambda e^{-i\pi\alpha}}+\frac{e^{(1+\frac{1-\beta}{\alpha})z_{2}}}{e^{z_{2}}-\lambda e^{i\pi\alpha}}\right),
\end{equation*}
\begin{equation*}
=\frac{\pi\lambda^{\frac{1-\beta}{\alpha}}\sin(\pi(\beta-\alpha+\frac{1-\beta}{\alpha}))}{\alpha\sin(\pi\alpha)\sin(\pi\frac{\beta-1}{\alpha})},
\end{equation*}
in which $e^{z_{1}}=\lambda e^{i\pi\alpha},e^{z_{2}}=\lambda e^{i\pi(2-\alpha)}$. Using the same method,we have
\begin{equation*}
M_{2}=\frac{\pi\lambda^{\frac{1-\beta}{\alpha}-1}\sin((\beta+\frac{1-\beta}{\alpha})\pi)}{\alpha\sin(\pi\alpha)\sin((\frac{\beta-1}{\alpha})\pi)},
\end{equation*}
and we note the fact that
\begin{equation*}
\sin(\pi(\beta-\alpha+\frac{1-\beta}{\alpha}))\sin(\beta\pi)-\sin(\pi(\beta-\alpha))\sin(\pi(\beta+\frac{1-\beta}{\alpha}))
\end{equation*}
\begin{equation*}
=\sin(\pi\frac{1-\beta}{\alpha})\sin(\pi\alpha),\eqno(20)
\end{equation*}
So we can have
\begin{equation*}
\int_{0}^{\infty}f_{\alpha,\beta}(v)dv=-\frac{\lambda^{\frac{1-\beta}{\alpha}}}{\alpha}.
\end{equation*}

$(II)$ If $\beta=1$,
\begin{equation*}
\int_{0}^{\infty}f_{\alpha,1}(v)dv=
-\frac{\lambda\sin(\pi\alpha)}{\pi}M_{2},
\end{equation*}
in which
$$\begin{array}{ll}
M_{2}&=\int_{0}^{\infty}\frac{v^{\alpha-1}}{v^{2\alpha}-2\lambda v^{\alpha}\cos(\alpha\pi)+\lambda^{2}}dv\\
&=\frac{1}{\alpha}\int_{0}^{\infty}\frac{du}{(u-\lambda e^{i\pi\alpha})(u-\lambda e^{-i\pi\alpha})}\\
&=\frac{\pi(1-\alpha)}{\alpha\lambda\sin(\alpha\pi)}.
\end{array}$$
So we have
\begin{equation*}
\int_{0}^{\infty}f_{\alpha,1}(v)dv=1-\frac{1}{\alpha}.
\end{equation*}

$(III)$ If $\beta=\alpha$,we can easily find $v^{\alpha}|v^{2\alpha}-2\lambda v^{\alpha}cos(\pi\alpha)+\lambda^{2}|^{-1}\simeq O(v^{-\alpha})$ when $v\rightarrow{\infty}$, so $\int_{0}^{\infty}f_{\alpha,1}(v)dv$ doesn't exist.
\end{proof}
\begin{remark}When $\alpha\in(\frac{1}{2},1),\beta\in(1,1+\alpha),\Re(\lambda)>0,$ by using Example $2.7$ and $2.8$, we can have the same results.In fact, if $\Re(s)>-1,$
$$\begin{array}{ll}
F(s)&=\mathcal{M}[\int_{0}^{\infty}e^{-(\lambda-u^{\alpha}\cos(\pi\alpha))v}\sin(u^{\alpha}v\sin(\pi\alpha))dv](s)\\
&=\mathcal{M}[\int_{0}^{\infty}e^{u^{\alpha}v\cos(\pi(1-\alpha))}\sin(u^{\alpha}v\sin(\pi(1-\alpha)))e^{-\lambda v}dv](s)\\
&=\frac{\lambda^{\frac{s}{\alpha}-1}}{\alpha}\Gamma(1-\frac{s}{\alpha})\Gamma(\frac{s}{\alpha})\sin(\pi(1-\alpha)\frac{s}{\alpha})\\
&=\frac{\pi\lambda^{\frac{s}{\alpha}-1}}{\alpha\sin(\frac{s\pi}{\alpha})}\sin(\pi(1-\alpha)\frac{s}{\alpha}).
\end{array}$$
And
\begin{equation*}
M_{1}=\frac{F(\alpha-\beta+1)}{\sin(\pi\alpha)}=\frac{\pi\lambda^{\frac{1-\beta}{\alpha}}\sin(\pi(\beta-\alpha+\frac{1-\beta}{\alpha}))}{\alpha\sin(\pi\alpha)\sin(\pi\frac{\beta-1}{\alpha})},
\end{equation*}
\begin{equation*}
M_{2}=\frac{F(1-\beta)}{\sin(\pi\alpha)}=\frac{\pi\lambda^{\frac{1-\beta}{\alpha}-1}\sin((\beta+\frac{1-\beta}{\alpha})\pi)}{\alpha\sin(\pi\alpha)\sin((\frac{\beta-1}{\alpha})\pi)}.
\end{equation*}
\end{remark}

\section{Applications}
We use  $(14),(15)$ to generalize the lemma $2$ in $[14]$ and give a new proof.
\begin{lemma}
Let $\alpha\in(0,1),\arg(\lambda)\in(-\pi\alpha,\pi\alpha),$
\begin{equation*}
J_{\lambda}(t)=\int_{0}^{\infty}e^{-ut}\frac{u^{\alpha-1}}{u^{2\alpha}-2%
\lambda u^{\alpha} \cos \left( \pi\alpha \right) +\lambda^{2}}du,\eqno(21)
\end{equation*}
then $J_{\lambda}(t)$ is continuous on $[0,\infty)$ and
\begin{equation*}
\lim\limits_{t\rightarrow\infty}J_{\lambda}(t)=0.
\end{equation*}
\end{lemma}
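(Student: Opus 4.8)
The plan is to deduce both assertions from the dominated convergence theorem, the only real preparatory work being the construction of a single, $t$-independent, integrable majorant for the integrand. Throughout write $D(u)=u^{2\alpha}-2\lambda u^{\alpha}\cos(\pi\alpha)+\lambda^{2}$ and $h(u,t)=e^{-ut}u^{\alpha-1}/D(u)$, so that $J_{\lambda}(t)=\int_{0}^{\infty}h(u,t)\,du$.

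First I would check that the integrand is continuous on $(0,\infty)\times[0,\infty)$ and that $D$ never vanishes on $[0,\infty)$. Factor $D(u)=(u^{\alpha}-\lambda e^{i\pi\alpha})(u^{\alpha}-\lambda e^{-i\pi\alpha})$. Under the hypothesis $\arg(\lambda)\in(-\pi\alpha,\pi\alpha)$ we have $\arg(\lambda e^{\pm i\pi\alpha})=\arg(\lambda)\pm\pi\alpha$, lying in $(0,2\pi\alpha)$ and $(-2\pi\alpha,0)$ respectively; since $0<2\pi\alpha<2\pi$, neither $\lambda e^{i\pi\alpha}$ nor $\lambda e^{-i\pi\alpha}$ is a non-negative real number. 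As $u^{\alpha}$ runs over all of $[0,\infty)$ while $u$ does, it follows that $D(u)\neq0$ for every $u\geq0$, hence $D$ is continuous and non-vanishing there and $h$ is continuous on $(0,\infty)\times[0,\infty)$.

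Next I would build the majorant. Since $|e^{-ut}|\leq1$ for all $t\geq0$, we get $|h(u,t)|\leq g(u):=u^{\alpha-1}/|D(u)|$, and the claim is that $g\in L^{1}(0,\infty)$: as $u\to0^{+}$ we have $D(u)\to\lambda^{2}\neq0$, so $g(u)=O(u^{\alpha-1})$, which is integrable on $(0,1]$ because $\alpha>0$; as $u\to\infty$ we have $|D(u)|\sim u^{2\alpha}$, so $g(u)=O(u^{-\alpha-1})$, which is integrable on $[1,\infty)$ because $\alpha>0$; and on every compact subinterval of $(0,\infty)$, $g$ is continuous hence bounded. So $\int_{0}^{\infty}g(u)\,du<\infty$. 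I expect this verification --- in particular the two power-law estimates together with the non-vanishing of $D$ --- to be the one genuinely careful step; it is exactly where the hypothesis $\arg(\lambda)\in(-\pi\alpha,\pi\alpha)$ is used, and it is the same absolute-convergence mechanism that underlies the integral in $(13)$.

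With the majorant in hand, both conclusions are immediate from dominated convergence. For continuity at an arbitrary $t_{0}\in[0,\infty)$: if $t_{n}\to t_{0}$ with $t_{n}\geq0$, then $h(u,t_{n})\to h(u,t_{0})$ for each $u>0$ and $|h(u,t_{n})|\leq g(u)\in L^{1}(0,\infty)$, whence $J_{\lambda}(t_{n})\to J_{\lambda}(t_{0})$; as $t_{0}$ and the sequence were arbitrary, $J_{\lambda}$ is continuous on $[0,\infty)$. For the behaviour at infinity: as $t\to\infty$, $h(u,t)\to0$ for each $u>0$ while still $|h(u,t)|\leq g(u)\in L^{1}(0,\infty)$, so $\lim_{t\to\infty}J_{\lambda}(t)=\int_{0}^{\infty}0\,du=0$. (Alternatively one could read both statements off formula $(15)$ together with the value $J_{\lambda}(0)=M_{2}=\pi(1-\alpha)/(\alpha\lambda\sin(\pi\alpha))$ from Theorem $3.5$ and the classical asymptotics of $E_{\alpha}$, but the direct argument above is shorter and avoids having to separate cases according to the sign of $\Re(\lambda^{1/\alpha})$.)
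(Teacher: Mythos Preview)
Your proof is correct. Both your argument and the paper's rest on the same three ingredients: the factorisation $D(u)=(u^{\alpha}-\lambda e^{i\pi\alpha})(u^{\alpha}-\lambda e^{-i\pi\alpha})\neq0$ under the hypothesis on $\arg(\lambda)$, the estimate $g(u)=O(u^{\alpha-1})$ as $u\to0^{+}$, and $g(u)=O(u^{-\alpha-1})$ as $u\to\infty$. The execution differs, however. You combine the two asymptotics into one global $L^{1}$ majorant and then invoke dominated convergence twice, once for continuity on $[0,\infty)$ and once for the limit at infinity; this is clean and self-contained. The paper instead splits the integral at a small $\varepsilon>0$: on $(0,\varepsilon)$ it uses $|D(u)|>|\lambda|^{2}/2$ together with $\int_{0}^{\infty}e^{-ut}u^{\alpha-1}\,du=\Gamma(\alpha)t^{-\alpha}$ to extract the explicit decay rate $|J_{\lambda}(t)|\le 2\Gamma(\alpha)/(|\lambda|^{2}t^{\alpha})+\int_{\varepsilon}^{\infty}(\cdots)$, handles the tail piece directly, and then appeals to Theorem~3.4 (the explicit evaluation $J_{\lambda}(0)=M_{2}$) for the finiteness at $t=0$ and hence continuity there. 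So the paper's route yields a quantitative $O(t^{-\alpha})$ bound as a by-product, at the cost of invoking the separate computation of $M_{2}$; your route is shorter, gives continuity at every $t\ge0$ in one stroke, and needs no reference to Theorem~3.4, but does not by itself produce a rate of decay (that rate is in any case recovered later in Theorem~4.2).
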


\begin{proof}In fact,when $\arg(\lambda)\in(-\pi\alpha,\pi\alpha)$,for $\forall u\in(0,\infty)$,$u^{2\alpha}-2\lambda u^{\alpha}\cos(\pi\alpha)+\lambda^{2}=(u^{\alpha}-\lambda e^{-\pi\alpha})(u^{\alpha}-\lambda e^{\pi\alpha})\neq0$. Choosing small $\varepsilon>0,$when $u\in (0,\varepsilon)$, we can have
$|u^{2\alpha}-2\lambda u^{\alpha}cos\pi\alpha+\lambda^{2}|>\frac{
|\lambda|^{2}}{2}>0$.When  $u\rightarrow\infty,$
\begin{equation*}
\frac{u^{\alpha-1}}{|u^{2\alpha}-2\lambda u^{\alpha}cos\pi\alpha+\lambda^{2}|%
}=\mathcal{O}(u^{-\alpha-1}).
\end{equation*}
And we can have
\begin{equation*}
|J_{\lambda}(t)|\leq \frac{2\Gamma(\alpha)}{|\lambda|^{2}t^{\alpha}}%
+\int_{\varepsilon}^{\infty}\frac{e^{-ut}u^{\alpha-1}}{|u^{2\alpha}-2\lambda
u^{\alpha}cos\pi\alpha+\lambda^{2}|}du,
\end{equation*}
so $\lim\limits_{t\rightarrow\infty}|J_{\lambda}(t)|=0.$
And with theorem $3.4$,we have $J_{\lambda}(t)$ is continuous on $[0,\infty)$ and
$\lim\limits_{t\rightarrow\infty}J_{\lambda}(t)=0.$
\end{proof}
\begin{theorem} Let $\alpha\in(0,1)$,$\lambda\neq0$, $\arg(\lambda)\in(-\pi\alpha,\pi\alpha)$, then
\begin{equation*}
E_{\alpha}(\lambda t^{\alpha})-\frac{1}{\alpha}e^{\lambda^{\frac{1}{\alpha}t}}\sim\sum_{k=1}^{\infty}\frac{a_{k}\Gamma(k\alpha)}{t^{k\alpha}},t\rightarrow\infty,
\eqno(22)\end{equation*}
\begin{equation*}
t^{\alpha-1}E_{\alpha,\alpha}(\lambda t^{\alpha})-\frac{\lambda^{\frac{1-\alpha}{\alpha}}}{\alpha}e^{\lambda^{\frac{1}{\alpha}}t}\sim\sum_{k=1}^{\infty}\frac{b_{k}\Gamma(k\alpha+1)}{t^{k\alpha+1}},t\rightarrow\infty,
\eqno(23)\end{equation*}
in which $
a_{k}=-\frac{ e^{i\pi\alpha}\sin(\pi\alpha)}{\pi\lambda^{k}}(\frac{\sin(k+1)\pi}{\sin(\pi\alpha)}-e^{ik\pi\alpha}),
$
$b_{k}=-\frac{1}{\lambda} a_{k}$,$k=1,2,\cdots.$ And there will have $t_{0}>0$, so that
\begin{equation*}
|E_{\alpha}(\lambda t^{\alpha})-\frac{1}{\alpha}e^{\lambda^{\frac{1}{\alpha}}t}|\leq \frac{K_{\alpha,\lambda}}{t^{\alpha}},t\in(0,\infty),
\eqno(24)\end{equation*}
\begin{equation*}
|t^{\alpha-1}E_{\alpha,\alpha}(\lambda t^{\alpha})-\frac{\lambda^{\frac{1-\alpha}{\alpha}}}{\alpha}e^{\lambda^{\frac{1}{\alpha}}t}|\leq\frac{L_{\alpha,\lambda}}{t^{\alpha+1}},[t_{0},\infty).
\eqno(25)\end{equation*}
in which $K_{\alpha,\lambda}$,$L_{\alpha,\lambda}$ are constants with $\alpha,\lambda$.

\begin{proof}In fact, when $v\in(0,|\lambda|^{\frac{1}{\alpha}})$, from $(17)$ in theorem $3.4$, we have
$$\begin{array}{ll}
f_{\alpha,1}(v)&=-\frac{\lambda\sin(\pi\alpha)}{\pi}\frac{v^{\alpha-1}}{v^{2\alpha}-2\lambda v^{\alpha}\cos(\pi\alpha)+\lambda^{2}}\\
&=-\frac{\lambda e^{i\pi\alpha}\sin(\pi\alpha)}{\pi}\sum_{k=1}^{\infty}\frac{v^{k\alpha-1}}{\lambda^{k+1}}\left(\frac{\sin(k+1)\pi}{\sin(\pi\alpha)}-e^{ik\pi\alpha}\right)\\
&=\sum_{k=1}^{\infty}a_{k}v^{k\alpha-1}
\end{array}$$
 and
\begin{equation*}
f_{\alpha,\alpha}(v)=-\frac{vf_{\alpha,1}}{\lambda}
\end{equation*}
by using lemma $2.4$, one can easily have $(22),(23)$.

Now let's prove $(24)$ and $(25)$.
In fact, we denote $f_{n}(v)=f_{\alpha,1}(v)-\sum\limits_{k=1}^{n}a_{k}v^{k\alpha-1}$,
when $v\in(0,|\lambda|^{\frac{1}{\alpha}})$,there will exist $N_{0}\geq1$,$C_{N_{0}}>0$, so that $|f_{N_{0}}(v)|<C_{N_{0}}v^{N_{0}\alpha}$. We choose proper $p\in\mathbb{C}$,$\Re(p)>0$ and denote $\varphi_{N_{0}}(v)=\int_{|\lambda|^{\frac{1}{\alpha}}}^{v}e^{-pu}f_{N_{0}}(u)du$.From lemma $4.1$, we can see that there will exist $A_{N_{0}}>0$, so that $|\varphi_{N_{0}}(v)|\leq A_{N_{0}}.$
\begin{equation*}
|E_{\alpha}(\lambda t^{\alpha})-\frac{1}{\alpha}e^{\lambda^{\frac{1}{\alpha}}t}|=|\mathcal{L}[f_{\alpha,1}](t)|\leq\sum_{k=1}^{N_{0}}\frac{|a_{k}\Gamma(k\alpha)|}{t^{k\alpha}}+|\mathcal{L}[f_{N_{0}}](t)|.
\end{equation*}
When $t>\Re(p)$, we have
\begin{equation*}
|\mathcal{L}[f_{N_{0}}](t)|\leq|\int_{0}^{|\lambda|^{\frac{1}{\alpha}}}e^{-vt}f_{N_{0}}(v)dv|+|\int_{|\lambda|^{\frac{1}{\alpha}}}^{\infty}e^{-vt}f_{N_{0}}(v)dv|
\end{equation*}
\begin{equation*}
\leq \frac{C_{N_{0}}\Gamma(N_{0}\alpha+1)}{t^{N_{0}\alpha+1}}+\frac{A_{N_{0}}|t-p|e^{-(t-\Re(p))|\lambda|^{\frac{1}{\alpha}}}}{t-\Re(p)}.
\end{equation*}
So there has $T_{0}>0,$ when $t>T_{0}$,
\begin{equation*}
|E_{\alpha}(\lambda t^{\alpha})-\frac{1}{\alpha}e^{\lambda^{\frac{1}{\alpha}}t}|\leq \frac{|a_{1}\Gamma(\alpha)|}{t^{\alpha}},
\end{equation*}
and given that $J_{\lambda}(t)$ is continuous in $[0,\infty)$,so we can have $L_{0}>0$,
$|E_{\alpha}(\lambda t^{\alpha})-\frac{1}{\alpha}e^{\lambda^{\frac{1}{\alpha}}t}|\leq\frac{L_{0}}{t^{\alpha}} ,$ $t\in(0,T_{0}].$ We choose $K_{\alpha,\lambda}\geq\max\{|a_{1}\Gamma(\alpha)|,L_{0}\}$,so we can prove $(24)$.We can use the same way to prove $(25)$.
\end{proof}
\end{theorem}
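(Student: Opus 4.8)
The plan is to reduce both statements to the behaviour of a single Laplace transform and then run a Watson-type argument, taking care that the small-$v$ series for the integrand converges only on a bounded interval. First, by $(15)$ and $(16)$ of Remark $3.3$ together with the relation $f_{\alpha,\alpha}(v)=-v\,f_{\alpha,1}(v)/\lambda$,
\begin{equation*}
E_{\alpha}(\lambda t^{\alpha})-\tfrac{1}{\alpha}e^{\lambda^{1/\alpha}t}=\mathcal{L}[f_{\alpha,1}](t),\qquad t^{\alpha-1}E_{\alpha,\alpha}(\lambda t^{\alpha})-\tfrac{\lambda^{(1-\alpha)/\alpha}}{\alpha}e^{\lambda^{1/\alpha}t}=\mathcal{L}[f_{\alpha,\alpha}](t),
\end{equation*}
so the whole theorem is a statement about $\mathcal{L}[f_{\alpha,1}]$ and $\mathcal{L}[f_{\alpha,\alpha}]$.

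Next I would expand the integrand near the origin and invoke Watson's lemma. Factoring the denominator of $f_{\alpha,1}$ as $(v^{\alpha}-\lambda e^{i\pi\alpha})(v^{\alpha}-\lambda e^{-i\pi\alpha})$ and expanding each factor as a geometric series in $v^{\alpha}/\lambda$ for $0<v<|\lambda|^{1/\alpha}$, then collecting powers of $v^{\alpha}$, gives the convergent expansion $f_{\alpha,1}(v)=\sum_{k\ge1}a_{k}v^{k\alpha-1}$ (the $a_{k}$ displayed in the statement), hence $f_{\alpha,\alpha}(v)=\sum_{k\ge1}b_{k}v^{k\alpha}$ with $b_{k}=-a_{k}/\lambda$. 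Since $\alpha\in(0,1)$ the leading exponents $\alpha-1$ and $\alpha$ exceed $-1$ and the exponents strictly increase, so Lemma $2.4$ applies — but only after splitting off $\int_{|\lambda|^{1/\alpha}}^{\infty}e^{-vt}f_{\alpha,1}(v)\,dv$ (and likewise for $f_{\alpha,\alpha}$), where the series no longer represents the integrand. This tail is $O(e^{-ct})$ as $t\to\infty$: on $[|\lambda|^{1/\alpha},\infty)$ bound $e^{-vt}$ by $e^{-vt_{1}}e^{-(t-t_{1})|\lambda|^{1/\alpha}}$ for a fixed $t_{1}>0$ for which the remaining integrand is absolutely integrable. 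Being transcendentally small it affects no term of the algebraic series, and Lemma $2.4$ applied to $\int_{0}^{|\lambda|^{1/\alpha}}$ yields $(22)$ and $(23)$, with $\Gamma(\lambda_{k}+1)$ equal to $\Gamma(k\alpha)$ and $\Gamma(k\alpha+1)$ respectively.

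For the uniform bounds I would truncate at $N_{0}=1$. With $f_{1}(v)=f_{\alpha,1}(v)-a_{1}v^{\alpha-1}$, the convergent remainder gives $|f_{1}(v)|\le Cv^{2\alpha-1}$ on $(0,|\lambda|^{1/\alpha})$, while on $(|\lambda|^{1/\alpha},\infty)$ — where $f_{1}$ decays only like $v^{\alpha-1}$ and is not integrable — an integration by parts against the bounded primitive $\varphi_{1}(v)=\int_{|\lambda|^{1/\alpha}}^{v}e^{-pu}f_{1}(u)\,du$, any $p$ with $\Re p>0$, shows $\int_{|\lambda|^{1/\alpha}}^{\infty}e^{-vt}f_{1}(v)\,dv$ is exponentially small. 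Hence $|\mathcal{L}[f_{\alpha,1}](t)|\le|a_{1}|\Gamma(\alpha)t^{-\alpha}+O(t^{-2\alpha})+O(e^{-ct})\le K_{1}t^{-\alpha}$ for $t$ large, while on the complementary bounded $t$-interval $\mathcal{L}[f_{\alpha,1}]$ extends continuously to $t=0$ by dominated convergence — using $f_{\alpha,1}\in L^{1}(0,\infty)$, its singularity at $0$ being only $v^{\alpha-1}$ (integrable as $\alpha>0$) and its decay at $\infty$ being $O(v^{-\alpha-1})$ — so it is bounded there, and a bounded function on a bounded $t$-set is trivially $\le K_{2}t^{-\alpha}$; then $K_{\alpha,\lambda}=\max\{K_{1},K_{2}\}$ works for all $t>0$. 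The same truncation with $b_{1}$ proves $(25)$, but only on $[t_{0},\infty)$: $f_{\alpha,\alpha}$ decays like $v^{-\alpha}$ at infinity, hence is not integrable — this is the ``nonexist'' case of Theorem $3.4$ — so $\mathcal{L}[f_{\alpha,\alpha}](t)$ blows up as $t\to0^{+}$ and no $t^{-\alpha-1}$ bound can hold near the origin; taking $t_{0}$ past the point where the asymptotic estimate dominates is exactly what is required.

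The one genuinely delicate step is the interplay in the second paragraph between the finite-radius expansion of the integrand and the global form of Watson's lemma: I must verify that cutting the $v$-integral at $|\lambda|^{1/\alpha}$ costs only an exponentially small error, so that the algebraic asymptotic series is left intact. Everything else — reading off the $a_{k}$, and the $L^{1}$/continuity facts driving the $t\to0$ end of $(24)$ — consists of the $\beta=1$ specializations already recorded in Theorem $3.4$ and Lemma $4.1$, so no new work is needed there.
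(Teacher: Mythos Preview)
Your proposal is correct and follows essentially the same line as the paper's proof: write the differences as $\mathcal{L}[f_{\alpha,1}]$ and $\mathcal{L}[f_{\alpha,\alpha}]$ via the integral representations, expand $f_{\alpha,1}$ as a power series in $v^{\alpha}$ on $(0,|\lambda|^{1/\alpha})$, split off the tail as an exponentially small term, apply Watson's lemma for $(22)$--$(23)$, then truncate and use the bounded-primitive/integration-by-parts device on the tail together with continuity at $t=0$ (Lemma~4.1/Theorem~3.4) for $(24)$--$(25)$. Your treatment is in fact a bit tidier in two places: you make the tail-splitting for Watson's lemma explicit, and your remainder bound $|f_{1}(v)|\le C v^{2\alpha-1}$ is the sharp exponent, whereas the paper's stated $|f_{N_{0}}(v)|<C_{N_{0}}v^{N_{0}\alpha}$ is slightly off for $\alpha<1$; your explanation of why $(25)$ must be restricted to $[t_{0},\infty)$ via the non-integrability of $f_{\alpha,\alpha}$ also makes explicit what the paper leaves implicit.
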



\begin{thebibliography}{9}\addtolength{\itemsep}{-4.5ex}
\bibitem{1}A.Wiman, \"{U}ber den Fundamentalsatz in der Theorie der Funktionen $E_{\alpha}(x)$,Acta Math., 29(1905), 191-201\\
\bibitem{2}A.Wiman, \"{U}ber die Nullstellen der Funktionen $E_{\alpha}(x)$, Acta Math.,
    29(1905),217-234.\\
\bibitem{3} A.Buhl, S\'{a}ries analytiques.Sommabilit\'{e}, Mem.Sci.Math.Fasc.7, Gauthier- Villars,Paris.\\
\bibitem{4} R.Gorenflo,J.Loutchko and Y.Luchko, Computation of The Mittag-Leffler Function $E_{\alpha,\beta}(z)$ and Its Derivative, Fractional and Applied Analysis, Janu.2002.\\
\bibitem{5} R.Gorenflo,A.A.Kilbas,F.Mainardi,S.V.Rogosin, $Mittag-Leffler$ Functions,Related Topics and Applications. Springer Berlin Heidelberg,2014.\\
\bibitem{6}P.Humbert, Quelques resultats relatifs a la fonction de Mittag-Leffler, C.r.Acad.sci.Paris,236(1953),1467-1468.\\
\bibitem{7}R.P.Agawal, A propos d'une note de M.Pierre Humbert, C.R.Acad.Sci.Paris, 236(1953),2031-2032.\\
\bibitem{8}Dietheim K, The analysis of fractional differential equations.Spring Heidelberg,New York,2010.\\
\bibitem{9}I.Podlubny,Fractional differential equations,Academic Press,1999.\\
\bibitem{10}I.V.Ostravski$\check{i}$,I.N.Peresyolkova,Nonasymptotic Results on Distribution of Zeros of the Function $E_{\rho}(z,\mu)$,Analysis Mathematic,23(1997),283-296.\\
\bibitem{11} R.Gorenflo,J.Loutchko and Y.Luchko, Computation of The Mittag-Leffler Function $E_{\alpha,\beta}(z)$ and Its Derivative, Fractional and Applied Analysis ,Janu.2002.\\
\bibitem{12}R.Gorenflo,F.Mainardi,Fractional Oscillations And Mittag-Leffler Functions,1996.\\
\bibitem{13}A.A,Kilbas,H.M.Srivastava,J.J.Trujillo,Theory And Appllications of Fractional Differential equations,ELSEVIER Press,2006.\\
\bibitem{14}N.D.Cong,D.T.Son,S.Siegmund,H.T.Tuan,An instability theorem for nonlinear fractional differential systems,Discrete and Contnuious Dynamical Systems,2017,3079-3090.\\
\bibitem{15}H.T.Tuan,on Some Special Properties of Mittag-Leffler Functions,arXiv: 1708.02277v2,21,Sep.2017.\\
\bibitem{16}H.J.Glaeske,A.P.Prudnikov,K.A.Sk$\grave{o}$rnik,Operational Calculus and Related Topics,Taylor  Francis Group,LLC,2006.\\
\bibitem{17}A.M.Mathai,H.J.Haubold,Fractional and Multivariable Calculus-Model Building and Optimization Problems,Springer Press,2017.
\end{thebibliography}
\end{document}